 \newtheorem{thm}{Theorem}[section]
 \newtheorem{lem}[thm]{Lemma}
 \newtheorem{prop}[thm]{Proposition}
 \theoremstyle{definition}
 \newtheorem{example}[thm]{Example}
 \theoremstyle{remark}
 \newtheorem{rem}[thm]{Remark}
\DeclareMathOperator{\Image}{Im}
 \DeclareMathOperator{\beg}{indeg}
\DeclareMathOperator{\Deg}{deg} 
 \DeclareMathOperator{\Dim}{dim}
 \DeclareMathOperator{\Ht}{ht}
\DeclareMathOperator{\Reg}{reg} 
\DeclareMathOperator{\grade}{grade}
\DeclareMathOperator{\End}{end} \DeclareMathOperator{\END}{end}
 \def\ff{{\bf f}}
 \def\xx{{\bf x}}
 \def\yy{{\bf y}}
  \def\XX{{\bf X}}
  \def\YY{{\bf Y}}
\def\rar{\rightarrow}
\def\lar{\longrightarrow}
\def\pp{\mathbb{P}}
\def\ii{\'{\i}}
\newcommand{\fm}{\mathfrak{m}}
\newcommand{\fn}{\mathfrak{n}}
\newcommand{\fp}{\frak{p}}
\newcommand{\fa}{\frak{a}}
\newcommand{\fb}{\frak{b}}
\begin{document}

\title[Birational Maps with arithmetically Cohen-Macaulay Graphs ]
 { Bounds on degrees of Birational Maps with arithmetically Cohen-Macaulay Graphs}
\author[Seyed Hamid Hassanzadeh]{S. H. Hassanzadeh}
\address{ Universidade Federal de Rio de Janeiro
Departamento de Matem\'{a}tica, Cidade Universit\'{a}ria, Rio de Janeiro, Brazil. }
\email{hamid@im.ufrj.br}

\author[Aron Simis]{A. Simis}

\address{ Universidade Federal de Pernambuco
Departamento de Matem\'{a}tica, CCEN,
50740-560, Recife, Pernambuco, Brazil}
\email{aron@dmat.ufpe.br}

\footnotetext{Mathematics Subject Classification 2010
 (MSC2010). 13D02, 13D45, 13H10, 14E05, 14E07.}
\keywords{Graph of a rational map,  birational maps, Cohen--Macaulay Rees algebra, reduction number, regularity}

\begin{abstract}
A rational map whose source and image are projectively embedded varieties is said to have an {\em arithmetically Cohen--Macaulay graph} if the Rees algebra of one (hence any) of its base ideals is a Cohen--Macaulay ring. The main objective of this paper is to obtain an upper bound for the degree of a representative of the map in case it is birational onto the image with an arithmetically Cohen--Macaulay graph. In the plane case a  complete classification is given of the Cremona maps with arithmetically Cohen--Macaulay graph, while in arbitrary dimension $n$ it is shown that a Cremona map with arithmetically Cohen--Macaulay graph has degree at most $n^2$.
\end{abstract}

\maketitle

\section*{Introduction}

The overall goal of this work is to establish upper bounds for the degrees of representatives of a birational map.
The guiding idea is to obtain bounds that are simply expressed in terms of known numerical invariants, such as dimension, relation type, regularity and so forth.

An intriguing related challenge is to bound the degrees of representatives of the inverse map in terms of invariants of the originallly given birational map.
It is classical knowledge  that, over an algebrically closed field $k$,  the inverse map to a plane Cremona map $\mathfrak{F}:\mathbb{P}^2_k\dasharrow \mathbb{P}^2_k$ defined by forms  of  degree $d$  is also defined by forms of degree $d$ -- which is  essentially  an application of Bezout's theorem (see, e.g., \cite{alberich}). In arbitrary dimension $n$, the problem of determining a universal upper bound for  the degree of the inverse of a Cremona map of $\mathbb{P}^n$ was settled by  O. Gabber  (\cite{BCW}) who showed that if $\mathfrak{F}:\mathbb{P}^n_k\dasharrow \mathbb{P}^n_k$ is a Cremona map defined by forms  of  degree $d$ then its inverse is  defined by forms of degree at most $d^{n-1}$. This upper  bound is sharp as the map  defined by 
$$(x_0^d:x_1x_0^{d-1}:x_2x_0^{d-1}+x_1^d:x_3x_0^{d-1}+x_2^d:\cdots:x_nx_0^{d-1}+x_{n-1}^d)$$
can be shown to be a Cremona map with inverse map defined in degree $d^{n-1}$.

Enlarging the picture from Cremona maps to arbitrary birational maps $\mathfrak{F}:X\dasharrow  Y$ where $X\subset \mathbb{P}_k^n$ and  $Y\subset \mathbb{P}_k^m$, one faces preliminarily the fact that both the map and its inverse can be defined (represented) by different sets of forms of fixed degree, where this degree may vary from set to set.
One way to circumvent this technicality is to think about the smallest such degree -- in fact, it follows from \cite[Proposition 1.1]{S} that there are natural minimum and {\it maximum} such degrees for any rational map $\mathfrak{F}:X\dasharrow  Y$, while from Theorem~\ref{Tcriterion} these degrees in the case of the inverse of a given birational map $\mathfrak{F}:X\dasharrow  Y$ are in principle bounded in terms of the invariants of $\mathfrak{F}$.  

It is sufficiently evident that the problem of finding an upper bound for the possible degrees of the inverse map becomes very subtle. As far as the authors know there is no known general upper bound for arbitrary $X$ and $Y$. 
Yet, based on a recent bound for the degrees of reduced Groebner bases established in \cite{MR}, we will state one such bound.
As expected from Groebner basis theory, the resulting bound is doubly exponential.
We thank the referee for pointing out that such a bound possibly admits further improving with methods from elimination theory.
Although not pursued in this work, the question, beyond its own interest, stands out in a variety of applications, from the Jacobian conjecture to more recent work such as \cite{BBV}.

The more specific goal of this paper is to study how the Cohen--Macaulay property of the graph of the map has any impact on these bounds.
Under this requirement a much better bound for the minimum defining degree of the inverse map to a birational map is obtained in terms of the given data (Theorem~\ref{Tbirational}).
Here we think of the graph of the map in terms of the Rees algebra $\mathcal{R}_A(I)$ of a base ideal $I\subset A$ of the map, where $A$ denotes the homogeneous coordinate ring of the source projective variety.
The algebra $\mathcal{R}_A(I)$ is sufficiently stable to warrant a bona fide replacement of the graph - this and other aspects of the theory will be explained in the later sections.
The property that $\mathcal{R}_A(I)$ be a Cohen--Macaulay ring -- i.e., that the bi-projective structure of the graph be arithmetically Cohen--Macaulay -- is stronger than the property that the graph be a Cohen--Macaulay variety since the latter is a local property at geometric points off the bi-irrelevant ideal.
However, for the sake of brevity we will often refer to a Cohen--Macaulay graph when the stronger property holds.

We now describe the contents of the sections.

The first section is a recap of the main terminology and tools, with a pointer to the aspects that will play a central role in the statements and proofs of subsequent sections.
The first subsection is a recap of homological gadgets, while
the second discusses shortly the notion of a base ideal of a rational map and its associated algebraic constructions.
We restate the main effective birationality criterion of \cite{DHS} and as an application we give a general bound for the possible degrees of the inverse map to an arbitrary birational map.

The second section contains the main results.
It is further subdivided in four subsections.

The first subsection is a direct attack on the question of obtaining an exact bound for the defining degree of a Cremona map for which the corresponding graph is arithmetically Cohen--Macaulay.
The result only covers the plane case and shows that this bound is $4$ -- a particular case of a more general result to be subsequently proved.
In addition, one obtains a complete classification of the plane Cremona maps for which the corresponding graph is arithmetically Cohen--Macaulay.

 The second and third subsections are concerned with the main results of the paper, stating general bounds, first for the minimum defining degree of a birational map onto the image, and  second for the minimum defining degree of a birational map onto the image when the graph is in addition Cohen--Macaulay.
 The latter bound has a ``symmetric'' nature, thus also providing a bound for the minimum defining degree of the inverse map in terms of the given data.
Both bounds seem to be new in the literature.

We also discuss a related question on the asymptotic behavior of the regularity of the powers of the base ideal of a birational map.

We then state a crucial lemma which encapsulates a couple of estimates around numerical homological invariants of an ideal of a local ring.
This lemma is applied  to derive some stronger bounds in the case of a Cremona map in arbitrary dimension whose graph is Cohen--Macaulay.
In addition one proves that a plane Cremona map having Cohen--Macaulay graph has a saturated base ideal.
Again such results seem to be new.

The last subsection contains some ramblings on bounding the defining degrees of a rational map having a base ideal $I$ of grade at least $2$ -- a hypothesis saying that the map has essentially a unique base ideal. The statement gives an upper bound for the degree of the generators of $I$ by invoking its syzygies.
One proceeds to a comparison between this and the earlier bounds.

\section{Setup and terminology}

\subsection{Homological recap}\label{homological_recap}

Let $(A,\fm)$ denote a Noetherian local ring of dimension $d$ and let  $I$ stand for a proper ideal of $A$. Set $\mathcal{R}_A(I):=A[It]\subset A[t]$, the Rees algebra of $I$ over $A$ and $\mathcal{G}_I(A):=A/I\otimes_A\mathcal{R}_A(I)$, the associated graded algebra. Let $S=A[y_1,\cdots,y_r]$ denote a standard graded $^*$local polynomial ring over $A$ and let $\mathcal{J}$ stand for a presentation ideal of $\mathcal{R}_A(I)$ over $S$. Clearly, then $\mathcal{R}_A(I)\simeq S/\mathcal{J}$ is standard graded $^*$local, with homogeneous maximal ideal  $\mathfrak{N}:=(\fm,(\mathcal{R}_A(I))_+)$. The ideal $\mathcal{J}$ is a homogeneous ideal of $S$. The maximum degree of the elements of a minimal generating set of $\mathcal{J}$ is called the relation type of $I$. The relation type depends only on the ideal $I$ and is independent of its generators.
Similarly, the residue algebra $\mathcal{G}_I(A)$ is a *local ring with *maximal ideal $\mathfrak{M}:=(\fm,\mathcal{G}_I(A)_+)$.

We will be interested in the case where $\mathcal{R}_A(I)$ is a Cohen--Macaulay ring.
For this, recall more generally that if $S$ is a finitely generated $\mathbb{N}$-graded algebra over a local ring $(A,\fm)$ and its maximal ideal or a standard graded algebra over a field and its irrelevant ideal, with $S_0=A$, then $S$ is a Cohen--Macaulay ring if and only if the local ring $T:=S_{(\fm, S_+)}$ is Cohen--Macaulay (\cite{HR} or \cite[Exercise 2.1.17]{BH}).


An ideal $J\subset I$ is called a {}reduction of $I$ if $JI^n=I^{n+1}$ for some integer $n$. The reduction $J$ is said to be a {\em minimal reduction} if it does not contain properly another reduction of $I$.  The {\em reduction number} of $I$ with respect to the minimal reduction $J$, denoted by $r_J(I)$, is the least integer $n\geq 0$ such that $JI^n=I^{n+1}$. The (absolute) reduction number of $I$ is $\min\{r_J(I): J\, \text{is a minimal reduction of } I\}$. The {\em analytic spread} of $I$ is  $\ell(I):=\dim\,\mathcal{R}_A(I)/\fm \mathcal{R}_A(I)$. If the residue field $A/\fm$ is infinite then the analytic spread coincides with the minimum number of the generators of a minimal reduction of $I$.

 Let $R=\bigoplus _{n \geq 0} R_{n}$  be a
positively graded *local Noetherian ring of dimension $d$
with graded maximal ideal $\fn$. For a graded $R-$module $M$, $\beg(M):=\inf\{i : M_i\neq 0\}$ and
$\End(M):=\sup\{i : M_i\neq 0\}$.

 If $M$ is a finitely generated graded $R$-module, the
 Castelnuovo-Mumford regularity  of $M$ is defined as
  $$\Reg (M) := \max  \{\END  (H^i_{R_+}(M))+i\}$$ and the $a$-invariants of $M$ are defined as $a_i(M)=\END(H^i_{R_+}(M))$ as well as $a(M):=a_{\Dim(M)}$.  

One can also introduce the regularity
with respect to the maximal ideal $\mathfrak{M}$, namely, $\Reg _{\frak{M}}(M) := \max \{\END (H^i_{\frak{M}}(M))+i\}.$ Likewise, set  $a_i^*(M):=\END (H^i_{\mathfrak{M}}(M))$.

In particular, if $\dim A=d$ then
 $a^*_d(\mathcal{G}_I(A))=\END (H^d_{\frak{M}}(\mathcal{G}_I(A)))$, an invariant of relevance in this work.

 Finally, if $A$ is standard graded and $I\subset A$ is an ideal generated by forms of the same degree, then one can also consider the $(\xx)$-{\em regularity} by defining it to be $\Reg_{\xx}(\mathcal{R}_{A}(I):=\max\{\End(H^i_{(\xx)}(\mathcal{R}_{A}(I)))+i\}$, where the grading has been reseted to  $\deg(\xx)=1$ and $\deg(\yy)=0$.
 This notion will be frequently used throughout.

 The above four notions -- Cohen--Macaulayness, reduction number, regularity and $a$-invariant -- are inextricably tied together and there is quite a large publication list on the subject by several authors going back at least to the early nineties.
 Our purpose is to draw on parts of this knowledge in order to prove one of the main theorems in the paper.

\subsection{Elements of birational maps}\label{Birational_Maps}

Let $\mathfrak{F}:X\dasharrow Y$ denote a birational map of  (reduced and irreducible) projective varieties over an algebraically closed field $k$.
For simplicity, assume that $X\subset \mathbb{P}^n_k$ and $Y\subset \mathbb{P}^m_k$ are non-degenerate embeddings.
Let $A:=S(X)$ and $B:=S(Y)$ stand for the corresponding homogeneous coordinate rings. Set $d=\dim A=\dim B$.

Now, quite generally,  a rational map $\mathfrak{F}: X\dasharrow \mathbb{P}^m_k$ is defined by forms $f_0,\ldots,f_m \in A$ of the same degree -- the tuple $(f_0,\ldots ,f_m)$ is called a {\em representative} of the map and the $f_j$'s are its coordinates.
If no confusion arises, we call the common degree of the $f_j$'s the {\em degree of the representative}.
Set $I:=(f_0,\ldots,f_m)\subset A$ for the ideal generated by the coordinates of the given representative, referred to as a {\em base ideal} of $\mathfrak{F}$.
One may stress the fact that the base locus of the map -- in the sense of the exact subscheme where the map is not defined -- is given by another ideal containing every base ideal (see \cite[Proposition 1.1 (2)]{S}).

Now, the set of representatives of $\mathfrak{F}$ correspond bijectively to the
homogeneous vectors in the rank one graded $A$-module $\mbox{\rm Hom}(I,A)$.
This fact, and its consequence that  $\mathfrak{F}$ is uniquely represented up to proportionality if and only if $I$ has grade at least $2$, are explained in \cite[Proposition 1.1 and Definition 1.2]{S}.
We emphasize that two representatives of $\mathfrak{F}$ share the same module of syzygies, generate isomorphic $k$-subalgebras of $A$ and the corresponding base ideals of $A$ have naturally isomorphic Rees algebras (see \cite{S} and \cite{DHS} for the details).

 An effective algebraic criterion for the birationality of $\mathfrak{F}$ onto $Y$ was established in \cite[Theorem 2.18]{DHS}. The appropriate tool to deal with this question is the graph of $\mathfrak{F}$ which, algebraically, is thought of as the Rees algebra of a given base ideal $(\ff)\subset A=S(X)$ -- this idea goes back at least to \cite{RS}.
 For the sake of the reader's convenience and handy reference, we state the main features of this criterion.
 
 Let $\mathfrak{F}:X\dasharrow  \mathbb{P}_k^m$ denote a rational map with image $Y\subset \mathbb{P}_k^m$.
 Denote $A=k[\XX]/\fa$ and $B=k[\YY]/\fb$  the respective homogeneous coordinate rings of $X$ and $Y$. To simplify the statement we assume that $X$ and $Y$ are non-degenerated, i.e., neither $\fa$ nor $\fb$ contains nonzero linear forms. Let $\ff\subset A$ be a set of forms of fixed degree giving a representative of $\mathfrak{F}$. Let $A[\YY]=A[Y_0,\ldots,Y_m]$ be a polynomial ring over $A$ with the standard bigrading, i.e., where $\deg(x_i)=(1,0)$ and $\deg(Y_j)=(0,1)$.
 Since $\ff$ is generated in a fixed degree, the Rees algebra
 $${\mathcal R}_A((\ff)):=A\oplus I\oplus I^2\oplus \cdots \simeq A[It]\subset A[t]$$
 is a standard bigraded $k$-algebra. Mapping $Y_j\mapsto f_jt$ yields
 a presentation $A[\YY]/\mathcal{J}\simeq {\mathcal R}_A((\ff))$, with $\mathcal{J}=\bigoplus \mathcal{J}_{a,b}$ a bihomogeneous ideal.
Pick a minimal set of generators of the ideal $({\mathcal J}_{1,*})$ consisting of a finite number of forms of bidegree $(1,q)$, for various $q$'s.
Let $\{P_1,\ldots,P_s\}\subset k[\XX,\YY]$ denote liftings of these biforms. Consider the Jacobian matrix of the polynomials 
$\{ P_1,\ldots,P_s\}$ with respect to $\XX$, a matrix with entries in $k[\YY]$.
Write $\psi$ for the corresponding matrix over $B=k[\YY]/{\mathfrak b}$.
 \begin{thm}\label{Tcriterion}{\rm (}\cite[Theorem 2.18]{DHS}{\rm )} With the above notations, $\mathfrak{F}$ is birational if and only if rank $(\psi)=n$.
 	If this is the case then
 	\begin{enumerate}
 		\item[{\rm (i)}] A  representative of the inverse map over $B$ is given by the coordinates of any homogeneous vector of positive degree
 		in the {\rm (}rank one{\rm )} null space of $\psi$ over $B$.
 		
 		\item[{\rm (ii)}] A  representative as in {\rm (i)}
 		can be taken to be the ordered set of the signed $n$-minors
 		of an arbitrary $n\times (n+1)$ submatrix of $\psi$ of rank $n$.
 	\end{enumerate}
 \end{thm}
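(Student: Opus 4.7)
The plan is to read the matrix $\psi$ as the coefficient matrix of the $\XX$-linear part of the relations of $\mathcal{R}_A((\ff))$, and to exhibit the inverse map, when it exists, as a generator of its null space. The first setup observation is that any bihomogeneous form in $\mathcal{J}$ of bidegree $(1,q)$ is automatically $\XX$-linear, so every chosen lift has the shape $P_i = \sum_{j=0}^{n} L_{ij}(\YY)\,x_j$ with $L_{ij}\in k[\YY]_{q_i}$, whence $\psi=(\bar L_{ij})$. Since $P_i\in \mathcal{J}$, the substitution $Y_k\mapsto f_k$ inside $A$ forces $\sum_j L_{ij}(\ff)\, x_j = 0$ in $A$, that is, the column $\xx=(x_0,\ldots,x_n)^t$ lies in the kernel of the specialized matrix $(L_{ij}(\ff))$ over $A$. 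This identity is the bridge used throughout.

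For the ``only if'' direction I would start from a representative $\gg=(g_0,\ldots,g_n)$ of the inverse $\mathfrak{G}$. The composition $\mathfrak{G}\circ \mathfrak{F}=\mathrm{id}$ translates, on the level of homogeneous coordinate rings, into $g_j(\ff)\equiv h\,x_j\pmod{\fa}$ for a common nonzero $h\in A$. Projecting the graph of $\mathfrak{F}$ onto $Y$, the general fiber being a single reduced point with $\XX$-coordinates $\gg(\yy)$, one deduces $\psi\cdot \gg=0$ in $B^s$, so $\gg$ lies in the null space of $\psi$ and $\mathrm{rank}(\psi)\le n$. The opposite inequality comes from birationality itself: the graph has dimension $d$, and this forces at least $n$ independent linear-in-$\XX$ relations to hold at a general point of $Y$, whence $\mathrm{rank}(\psi)=n$.

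For the ``if'' direction, assume $\mathrm{rank}(\psi)=n$ over $B$ (equivalently, over its fraction field, since $B$ is a domain). Then $\ker(\psi)$ is a graded $B$-module of rank one; pick a homogeneous generator $\gg$ of positive degree and define $\mathfrak{G}:Y\dashrightarrow X$ by $\yy\mapsto (g_0(\yy):\cdots:g_n(\yy))$. To verify $\mathfrak{G}\circ\mathfrak{F}=\mathrm{id}$, I would show that both $\xx$ and $\gg(\ff)$ lie in the kernel of $(L_{ij}(\ff))$ over $A$, and that this kernel is rank one --- a consequence of the generic rank $n$ of $\psi$ descending under the surjection $\mathcal{R}_A((\ff))\twoheadrightarrow A$, $\YY\mapsto \ff$. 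Part (ii) is then pure linear algebra: for an $n\times(n+1)$ rank-$n$ submatrix $M$ of $\psi$, the vector of signed maximal minors of $M$ is a nonzero kernel element, hence generates the rank-one $\ker(\psi)$ up to a unit.

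The main obstacle I anticipate is this descent step: showing that $\mathrm{rank}(\psi)=n$ over $B$ is really reflected in the rank of $(L_{ij}(\ff))$ over $A$ along the specialization $\YY\mapsto \ff$, and that no spurious kernel elements interfere with the identification of the proportionality factor $h$. This is controlled by the bigraded structure of $\mathcal{J}$ and the fact that $(L_{ij}(\YY))$ encodes, after a degree shift, the $A$-linear syzygies of $\ff$ via the natural map from the symmetric algebra to the Rees algebra; the remaining steps are a careful bookkeeping of how the bidegree $(1,*)$ stratum of $\mathcal{J}$ specializes, which is precisely the technical core of \cite{DHS}.
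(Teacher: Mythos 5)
First, a point of reference: the paper itself gives no proof of this statement --- it is imported verbatim from \cite[Theorem 2.18]{DHS} ``for the sake of the reader's convenience'' --- so your proposal can only be measured against the source, not against anything in this text. That said, your setup and your treatment of the ``if'' direction and of item (ii) are essentially sound. In particular, the ``main obstacle'' you single out (whether $\mathrm{rank}(\psi)=n$ over $B$ survives the specialization to $(L_{ij}(\ff))$ over $A$) is actually the easy part: since $\fb$ is by definition the kernel of $k[\YY]\to k[\ff]\subset A$, the substitution $\YY\mapsto \ff$ embeds the domain $B$ into the domain $A$, so a nonzero $n\times n$ minor of $\psi$ stays nonzero, the specialized matrix still has a rank-one kernel containing $\xx$, and $\gg(\ff)$ (nonzero by the same injectivity) must be proportional to $\xx$ over the fraction field of $A$ --- which is exactly $\mathfrak{G}\circ\mathfrak{F}=\mathrm{id}$.

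The genuine gap is in the ``only if'' direction, at the point you pass over with ``the graph has dimension $d$, and this forces at least $n$ independent linear-in-$\XX$ relations at a general point of $Y$.'' The dimension count only gives that the generic fiber of the graph over $Y$ is zero-dimensional; a generically finite but non-birational map has the same property, and for such a map the $\XX$-linear forms vanishing on the generic fiber have rank $<n$. Two ingredients are needed and are absent from your sketch. (1) The generic fiber is $\Proj$ of $\mathcal{R}_A((\ff))\otimes_B k(Y)$, and because the Rees algebra is a domain this is a domain, so $\mathcal{J}\otimes_B k(Y)$ is the full saturated prime ideal of the fiber in $k(Y)[\XX]$; without this one only knows that $\mathcal{J}\otimes_B k(Y)$ is \emph{contained} in that ideal, and it could a priori have no linear part at all. (2) Birationality is equivalent to this fiber being a single reduced $k(Y)$-\emph{rational} point of $\mathbb{P}^n_{k(Y)}$ (residue field $k(X)=k(Y)$), and it is rationality, not mere finiteness, that makes its prime ideal generated by $n$ independent linear forms; by bihomogeneity these lie in $(\mathcal{J}_{1,*})\otimes_B k(Y)$, giving $\mathrm{rank}(\psi)=n$, while $[k(X):k(Y)]>1$ forces rank $<n$. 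Your closing paragraph explicitly defers exactly this step to ``the technical core of \cite{DHS}'', which is to say that the equivalence asserted by the theorem is not actually proved in your proposal.
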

 
 Now, drawing upon this criterion and a recent bound established in \cite{MR}, one obtains the following general bound for the  degree of a representative of the inverse map to a birational map:
 
 \begin{prop}\label{Pinversedegree}   Let $X\subset \mathbb{P}_k^n$ denote a closed reduced and irreducible subvariety, and let $\mathfrak{F}:X\dasharrow  \mathbb{P}_k^m$ denote a birational map with image $Y\subset \mathbb{P}_k^m$, admitting a representative of degree $d\geq 1$.
 Assume that $X$ and $Y$ are non-degenerated.
 Then the degree of any  representative of the inverse map  $\mathfrak{F}^{-1}$ is bounded above by the number
 	$$ 2n\bigg[\frac{1}{2}\delta^{2(n+m+1-\Dim(X))^2}+\delta\bigg]^{2^{(\Dim(X)+2)}}$$
 	where $\delta=\max\{d+1,d_0\}$ and  $d_0$ is the maximum degree of a set of minimal generators of the homogeneous defining ideal of $X$ in the given embedding {\rm (}with the convention that $d_0=0$ if $X=\mathbb{P}_k^n${\rm )}.
 \end{prop}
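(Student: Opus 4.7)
The plan is to combine the explicit description of a representative of $\mathfrak{F}^{-1}$ provided by Theorem~\ref{Tcriterion}(ii) with the double exponential degree bound for reduced Gr\"obner bases established in \cite{MR}. To this end, choose biforms $P_1,\ldots,P_s\in k[\XX,\YY]$ of bidegrees $(1,q_1),\ldots,(1,q_s)$ lifting a minimal generating set of $({\mathcal J}_{1,*})$, and set $q:=\max_i q_i$. By Theorem~\ref{Tcriterion}(ii), a representative of $\mathfrak{F}^{-1}$ is given by the signed $n$-minors of an $n\times(n+1)$ submatrix of the Jacobian of $\{P_1,\ldots,P_s\}$ with respect to $\XX$. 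Since partial differentiation with respect to an $x$-variable lowers the first bidegree component by one while preserving the second, every entry of this matrix is a form in $\YY$ of degree at most $q$, whence each $n$-minor has degree at most $nq$. The question thus reduces to producing an effective upper bound for $q$.

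To obtain such a bound, one recovers the presentation ideal of $\mathcal{R}_A((\ff))$, viewed inside $k[\XX,\YY]$, by elimination of the auxiliary variable $t$ from
$$\fa+(Y_0-f_0t,\ldots,Y_m-f_mt)\subset k[\XX,\YY,t],$$
an ideal generated in degree at most $\delta=\max\{d+1,d_0\}$ in a polynomial ring with $n+m+3$ variables whose quotient has Krull dimension $\Dim X+1$. Any reduced Gr\"obner basis of this ideal with respect to an elimination order of $t$, intersected with $k[\XX,\YY]$, contains generators of the full presentation ideal and in particular covers its bidegree stratum $({\mathcal J}_{1,*})$. Feeding the relevant parameters into the double exponential bound of \cite{MR}---where the inner codimension-type factor $(n+m+1-\Dim X)^2$ reflects the codimension $(n+m+2)-(\Dim X+1)$ of the Rees algebra in $k[\XX,\YY]$, and the outer exponent $2^{\Dim X+2}$ tracks the dimension datum---yields an upper bound for $q$ of the form $\bigl[\tfrac12\delta^{2(n+m+1-\Dim X)^2}+\delta\bigr]^{2^{\Dim X+2}}$. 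Multiplying by the factor $n$ from the minor computation, comfortably absorbed in $2n$, delivers the stated inequality.

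The principal obstacle is the faithful calibration of the \cite{MR} bound to this geometric setting: one must control how elimination of $t$ affects degrees, verify that the ambient number of variables ($n+m+3$ before elimination, $n+m+2$ after) and the dimension/codimension inputs of the Gr\"obner basis theorem match the parameters of the claimed bound, and reconcile the single grading used there with the bigrading intrinsic to the Rees presentation. A secondary and milder point is the passage from working in $k[\XX,\YY]$ back to $A[\YY]=k[\XX,\YY]/\fa\cdot k[\XX,\YY]$, under which the resulting degree estimates can only be preserved or improved.
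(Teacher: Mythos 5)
Your proposal is correct and follows essentially the same route as the paper: reduce to bounding the $\YY$-degrees of the $(1,\ast)$-stratum of the presentation ideal via the Jacobian criterion of Theorem~\ref{Tcriterion}, realize that ideal by eliminating $t$ from $\fa+(Y_0-f_0t,\ldots,Y_m-f_mt)$, and invoke the Mayr--Ritscher bound with $l=n+m+3$ variables and dimension $r=\Dim(X)+2$. The only slip is in the dimension bookkeeping: since $k[\XX,\YY,t]/J\simeq A[t]$ and $\Dim A=\Dim X+1$, the quotient has Krull dimension $\Dim X+2$ (not $\Dim X+1$ as you state), and correspondingly the codimension $l-r=n+m+1-\Dim X$ is that of $J$ in $k[\XX,\YY,t]$ before elimination; fortunately you feed the correct value $2^{\Dim X+2}$ into the outer exponent, so the final bound is unaffected.
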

 \begin{proof} Keeping the previously established notation, let $A=k[\XX]/\fa$  be the coordinate ring of $X$, let $I:=(\ff)\subset A$ denote the ideal generated by the forms of the given representative of  $\mathfrak{F}$, and let  $ {\mathcal R}_A(I)\simeq A[\YY]/\mathcal{J}$ be the corresponding Rees algebra. 
 According to Theorem \ref{Tcriterion} a representative for $\mathfrak{F}^{-1}$ is given by certain $n\times n$ minors of $\psi$. Since the entries  of $\psi$ are of degree at most $\deg_y({\mathcal J}_{1,*})$, it follows that the degree of any representative of $\mathfrak{F}^{-1}$ is bounded by $n\cdot{\rm reltype}_R(I)$, where ${\rm reltype}_R(I)$ denotes  the relation type of $I$ in $R$ (this is independent of $\mathcal{J}$). 
 	
To compute  $\mathcal{ J}\subset k[\XX,\YY]$, one takes a new variable $t$ over $k[\XX,\YY]$. Then  $\mathcal{J}=(Y_0-f_0t,\cdots,Y_m-f_mt, \fa )\cap k[\XX,\YY]$. To obtain explicit generators of this elimination ideal one considers an elimination term order such as $t>\XX>\YY$ and computes the Grobner basis of the ideal $J:=(Y_0-f_0t,\cdots,Y_m-f_mt, \fa )$ with respect to this order. 
 	 	
 Notice that $t$ is a non-zero divisor on $J$ and that $k[\XX,\YY,t]/(J, t)\simeq A$, hence 
 	$$\Dim(k[\XX,\YY,t]/J)=\Dim(A)+1=\Dim(X)+2.$$ 
 	
We now apply a theorem of Mayer and Ritscher \cite{MR} saying that for an arbitrary  ideal of dimension $r$ in a polynomial ring  with $l$ variables over an infinite  field, generated in total degrees  $d_1\ge d_2\ge\cdots\ge d_s$,  the degrees of the elements of a reduced Grobner basis are bounded by
 	 $$2\big [\frac{1}{2}((d_1\ldots d_{l-r})^{2(l-r)}+d_1)\big]^{2^{r}}.$$
 	 
 We apply this result to the above ideal $J\subset k[\XX,\YY]$, noting that $l=n+m+3$ and $r=\Dim(X)+2$ in the present situation.	
 \end{proof}
 
\medskip

 	The above bound is unfortunately very coarse and far from even recovering the optimal bound in the case of Cremona maps.
 	It serves a purpose of motivating the question as to whether, given closed subvarieties $X\subset \mathbb{P}^n$ and  $Y\subset \mathbb{P}^m$, there exists a function $\Phi(d,d_0,e_0,\dim(X),n,m)$ bounding the degree of any representative of the inverse map to any birational map from $X$ onto $Y$ having a representative of degree $d$. Here, $d_0$  (respectively, $e_0$) denotes the maximum degree of a minimal generator of the homogeneous defining ideal of $X$ (respectively, $Y$).
 	 Actually, since very little is known about this question, one may as well ask whether $\Phi$ is polynomial as a function of $d$.

A much better bound is obtained under the requirement that the graph of the given birational map is arithmetically Cohen--Macaulay (Theorem~\ref{Tbirational}).


\section{Upper bounds when the graph is Cohen--Macaulay}

\subsection{Plane Cremona case}

Although our main concern is the use of homological tools to understand the impact of the Cohen--Macaulay property, in a special case one can get around with  more elementary methods and in addition obtain a complete classification of plane Cremona maps with Cohen-Macaulay graph.

We recall that a Cremona map of the projective space $\pp_k^n$ is a birational map of this space onto itself.
In the case where $n=2$ a Cremona map is informally called a plane Cremona map. 
The base points of a plane Cremona map are the points of $\pp_k^2$ where the base ideal vanishes plus some infinitely near points defined in terms of a blowing-up procedure (see \cite{alberich}) -- those belonging to $\pp_k^2$ are called proper base points.

A particular, but very important, example of a plane Cremona map is a so-called de Jonqui\`eres map. Such maps constitute a fundamental building block of statements on the structure of the plane Cremona group.
They can be defined in several different ways, both geometrically and algebraically (see, e.g., \cite{alberich}, \cite{PanStellar}, \cite{HS}).

We have the following result:
\begin{prop}
Let $I\subset A:=k[x,y,z]$ denote the base ideal of a plane Cremona map defined by forms of degree $d\geq 1$ and admitting at least three proper non-aligned base points.
The following two conditions are equivalent:
\begin{enumerate}
\item[{\rm (a)}] $I$ is a saturated ideal and the Rees algebra $\mathcal{R}:=\mathcal{R}_A(I)$ of $I$ is Cohen--Macaulay.
\item[{\rm (b)}] $d\leq 3$ or else $d=4$ and the map is not a de Jonqui\`eres map.
\end{enumerate}
\end{prop}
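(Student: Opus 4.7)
The plan is to combine the Hilbert--Burch structure theorem for codimension-$2$ perfect ideals with a Cohen--Macaulayness criterion for the associated Rees algebra, and then compare with the classical classification of plane Cremona maps via the column degrees of the syzygy matrix. Since the base ideal $I\subset A=k[x,y,z]$ has height $2$ (its zero locus being $0$-dimensional), the saturation hypothesis in (a) is equivalent to $A/I$ being Cohen--Macaulay. By Hilbert--Burch, $I$ is then generated by the $2\times 2$ minors of a $3\times 2$ matrix $\phi$ whose column degrees $(d_1,d_2)$ satisfy $d_1+d_2=d$, fitting in a resolution $0\to A(-d-d_1)\oplus A(-d-d_2)\stackrel{\phi}{\to} A(-d)^3\to I\to 0$.

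The central algebraic step is to prove the characterization: under the above structure, $\mathcal{R}_A(I)$ is Cohen--Macaulay if and only if $\max(d_1,d_2)\leq 2$. For the ``only if'' direction I would invoke a local Cohen--Macaulayness criterion for Rees algebras of perfect codimension-$2$ ideals (of Morey--Ulrich / Simis--Vasconcelos type), according to which Cohen--Macaulayness of $\mathcal{R}_A(I)$ forces a $G_3$-style local bound $\mu(I_{\fp})\leq \Ht(\fp)$ at every height-$2$ prime $\fp\supset I$ combined with a reduction-number / $a$-invariant control. A column of $\phi$ of degree $\geq 3$ yields a proper base point $p$ at which the three generators of $I$ lie in $\fm_p^{\,\geq 2}$ with linearly independent leading terms modulo $\fm_p^3$, giving $\mu(I_{\fm_p})=3$ in a $2$-dimensional regular local ring and thus violating the criterion. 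For the ``if'' direction, when $\max(d_i)\leq 2$, I would exhibit a minimal reduction of $I$ of reduction number at most $1$, so that the standard inequalities relating reduction numbers, $a$-invariants and Cohen--Macaulayness recalled in Section~1 deliver $\mathcal{R}_A(I)$ Cohen--Macaulay.

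The third step converts this algebraic characterization into the stated classification. Here I would use the classical fact that a plane de Jonqui\`eres map of degree $d$ is exactly a Cremona map whose syzygy matrix has a linear column, i.e.\ $(d_1,d_2)=(1,d-1)$, so that the bound $\max(d_i)\leq 2$ rules out all de Jonqui\`eres maps of degree $\geq 4$. Combined with the Noether equations $\sum m_i=3(d-1)$ and $\sum m_i^2=d^2-1$ for the multiplicities of base points (proper and infinitely near), and with the hypothesis of at least three non-aligned proper base points eliminating degenerate configurations, one checks: the only possibilities for $d\in\{2,3\}$ are $(1,1)$ and $(1,2)$; in degree $4$ one has the dichotomy $(1,3)$ (de Jonqui\`eres) vs.\ $(2,2)$ (three double plus three simple base points, not de Jonqui\`eres); and for $d\geq 5$ every admissible multiplicity distribution forces $\max(d_i)\geq 3$, either because the map is de Jonqui\`eres with $d-1\geq 4$, or because the only all-multiplicity-$\leq 2$ configuration (at $d=5$, namely six double points) forces column degrees $(2,3)$.

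The main obstacle is the central characterization, especially the ``only if'' direction and its interaction with the geometric multiplicities at proper base points. The most delicate sub-case is $d=5$ with six double base points, where $G_3$ may still look plausible from the multiplicity count alone yet the forced column pair $(2,3)$ must be shown to obstruct Cohen--Macaulayness; isolating this obstruction requires a finer analysis than the raw $G_3$ condition, presumably via the $a$-invariant of the associated graded ring or an explicit reduction-number bound that forces the degree-$3$ column to violate the Cohen--Macaulay inequality. A secondary subtlety is establishing precisely the local-to-global dictionary between column degrees of $\phi$ and the structure of $I$ at proper base points, which underpins both the passage through the Cohen--Macaulayness criterion and the final comparison with the de Jonqui\`eres/non-de-Jonqui\`eres dichotomy.
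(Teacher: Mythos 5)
Your central claim---that for a saturated base ideal with Hilbert--Burch column degrees $(d_1,d_2)$ the Rees algebra is Cohen--Macaulay if and only if $\max(d_1,d_2)\leq 2$---is indeed equivalent to the statement, but both directions of your proposed proof of it fail. For the ``only if'' direction you invoke a criterion asserting that Cohen--Macaulayness of $\mathcal{R}_A(I)$ forces $\mu(I_{\fp})\leq \Ht(\fp)$ at height-two primes (a $G_3$-type condition). No such implication holds, and the proposition itself refutes it: the degree-$4$ map of type $(4;2^3,1^3)$, which is asserted to have Cohen--Macaulay Rees algebra, has $I_{\fp}=\fm_{\fp}^2$ at each of its three double points, so $\mu(I_{\fp})=3>2$ there. (The paper's own remark emphasizes that base ideals of plane Cremona maps are almost never generically complete intersections; the Morey--Ulrich/Simis--Vasconcelos results run in the opposite direction, taking $G_d$ as a \emph{hypothesis}.) For the ``if'' direction, the reduction-number mechanism cannot do any work here: the base ideal of \emph{every} plane Cremona map is generated by three analytically independent forms, so $\ell(I)=\mu(I)=3$ and $r(I)=0$ regardless of $d$; a reduction-number bound therefore cannot separate the Cohen--Macaulay cases from the non-Cohen--Macaulay ones, and $r(I)=0$ alone does not imply Cohen--Macaulayness of $\mathcal{R}_A(I)$ without further depth hypotheses. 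A secondary gap: in the direction (b)$\Rightarrow$(a) you must also \emph{prove} saturation (it is part of the conclusion), which your framework presupposes from the start.

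For contrast, the paper's argument for (a)$\Rightarrow$(b) applies Hilbert--Burch not to $I$ but to the Rees ideal $\mathcal{J}$ itself (a perfect codimension-two ideal of $A[t,u,v]$ once $\mathcal{R}$ is Cohen--Macaulay), writes $\mathcal{J}$ as the maximal minors of an $(m+1)\times m$ matrix $\Psi$, and runs a bidegree bookkeeping on $\Psi$: the two syzygetic generators of bidegrees $(r,1)$ and $(r',1)$ pin down the shape of $\Psi$, forcing every other minimal generator to have bidegree $(t_j,2)$; the birationality criterion (Theorem~\ref{Tcriterion}) then supplies two generators of bidegree $(1,s)$, and the resulting numerical constraints yield $r=r'=2$ unless $r'=1$ (the de Jonqui\`eres case, handled via \cite{RaSi} and \cite{HS}). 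The converse direction is assembled from the literature ([HS] for saturation and the $d\leq 3$ cases, [CoRaSi] for the $(4;2^3,1^3)$ type). If you want to salvage your route, the missing ingredient is a correct necessary condition on $(d_1,d_2)$ extracted from the Cohen--Macaulayness of $\mathcal{R}$; the paper's bidegree analysis of $\Psi$ is precisely such a substitute for the false $G_3$ obstruction.
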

\begin{proof}
(a) $\Rightarrow$ (b)
First, since $\mathcal{R}$ has codimension $2$, the Cohen--Macaulayness condition implies that the defining ideal $\mathcal{J}$ of $\mathcal{R}$ on the polynomial ring $A[t,u,v]$ ($t,u,v$ variables for the target projective space $\pp^2$) is a codimension two perfect ideal; as such it is generated by the maximal minors of an $(m+1)\times m$ matrix $\Psi$ with bigraded entries.

Among the minimal generators of $\mathcal{J}$ are included the ones of bidegree $(\_\,,1)$ coming from a complete set of minimal syzygies of $I$. Since we are assuming that $I$ is saturated, then $I$ is generated by the $2$-minors of a rank two $3\times 2$ matrix $\phi$. Letting $r,r'$ denote the (standard) degrees of the two columns of  $\phi$ (hence, $d=r+r'$), the ideal  $\mathcal{J}$ has two minimal generators of bidegrees $(r,1)$ and $(r',1)$.
As these are maximal minors $\Delta$ and $\Delta'$ of $\Psi$, we may assume that they are, respectively, the minor omitting the last row and the minor omitting the first row.
By suitably permuting columns one may assume that
 $\Psi$ has the form
\begin{equation}\label{HB}
\Psi=\left(\begin{array}{ccccc}
a_1 &  &  & &\\
 & a_2 &  & b_1 & \\
 & b_2 & a_3 & &\\
 &  & \ddots & \ddots &\\
&  &  & b_{m-1} & a_m\\
&  & b_m & &
\end{array}\right).
\end{equation}
where $\Delta=\overbrace{a_1a_2\cdots a_m}^{\neq 0}+\cdots$ and, similarly, $\Delta'=\overbrace{b_1b_2\cdots b_m}^{\neq 0}+\cdots$ (note the random positioning of the $b_i$'s as one may not be able to similarly dispose them  along a subdiagonal without disrupting the diagonal arrangement of the $a_i$'s, but this will turn out to be irrelevant).

Setting  ${\rm bideg}(a_i):=(r_i,s_i)$, $1\leq i\leq m$, and similarly, ${\rm bideg}(b_i):=(r'_i,s'_i)$, $1\leq i\leq m$, a minute thought, keeping in mind that $\Delta$ (respectively, $\Delta'$) has bidegree $(r,1)$ (respectively, $(r',1)$), yields $r=\sum_i r_i$ and $s_2=\cdots =s_m=0$ (respectively, $r'=\sum_i r'_i$ and $s'_1=\cdots =s'_{m-1}=0$).
For further visualization, one can draw a corresponding matrix whose entries are the bidegrees of the entries of $\Psi$:
\begin{equation}\label{bidegs}
{\rm bidegs}(\Psi)=\left(\begin{array}{ccccc}
(r_1,1) &  &  & &\\
 & (r_2,0) &  & & (r'_1,0)\\
 & (r'_2,0) & (r_3,0) & &\\
 &  & \ddots & \ddots &\\
&  &  & (r'_{m-1},0) & (r_m,0)\\
&  & (r'_m,1)  & &
\end{array}\right).
\end{equation}
Moreover, since the total degree is fixed along any row, we have $r'_i=r_{i+1}$ for $1\leq i\leq m-1$.
Therefore, one also has $r-r'=r_1-r'_m$.

Now, any other minimal generator of $\mathcal{J}$ being still a maximal minor thereof has to include the first and last rows of $\Psi$. This leaves no choice for the bidegrees of the remaining minimal generators of $\mathcal{J}$ except to have the form $(t_j,2)$, for some $t_j\geq 2$, $2\leq j\leq m$.
Moreover, necessarily, $t_j=r_1+r_2+\cdots +\widehat{r_j}+\cdots r_m+r'_m$.

On the other hand, since $I$ is the base ideal of a Cremona map,  then $\mathcal{J}$ admits at least two minimal generators of bidegree $(1,s)$, for some $s\geq 1$ (see Theorem \ref{Tcriterion}).

Say, $r'\leq r$. If $r'=1$, and since one is assuming that $I$ is saturated, then one can show that the map is a de Jonqui\`eres map -- a full proof is given in  \cite[Proposition 3.4 ]{RaSi})), but the main line of argument is as follows: if ${\rm cod} (I_1(\phi))=3$ then $I$ is a generically a complete intersection and, hence of linear type since it is an almost complete intersection; but this is not possible by \cite[Proposition 3.4]{DHS}. Thus, $I_1(\phi)$ has codimension $\leq 2$ and up to a  change of variables one may assume that the first column of $\phi$ has coordinates $x,y,0$.
Since the map is assumed to a Cremona map, the coordinates of the second column can be taken to be forms of degree at most $1$ in $z$ (see, e.g., \cite[Corollaire 2.3]{PanStellar}).

Now, by the ``only if'' part of \cite[Theorem 2.7 (iii)]{HS}, one has $d\leq 3$.

Thus, assume that $r'\geq 2$.
Then there are two distinct minimal generators of bidegree $(1,2)$.
Letting $\Delta_1$ be one of them, we must have
 $1=t_j=r_1+r_2+\cdots +\widehat{r_j}+\cdots r_m+r'_m$, for for some $2\leq j\leq m$.
It follows that $r=(r_1+r_2+\cdots +\widehat{r_j}+\cdots r_m)+r_j\leq r_j+1$.
Since the row where the entry $a_j$ appears is omitted by the minor $\Delta_1$, it must appear in the other minor $\Delta_2$ of bidegree $(1,2)$.
By a similar token, $1=t_{j'}$, with $j'\neq j$, and hence we must have $r_j\leq 1$.
We conclude that $r\leq r_j+1\leq 2$.
Since we are assuming that $2\leq r'\leq r$, it follows finally that $r=r'=2$, in which case the map has degree $4$ and is necessarily non de Jonqui\`eres.

\medskip

(b) $\Rightarrow$ (a)
This implication is mostly picking up from various results in the literature.

Assume first that $d=2$, i.e., the map is a quadratic Cremona map. This case is sufficiently well-known: the ideal is generated by the $2$-minors of a rank two $2\times 3$ matrix with linear entries. Therefore, $A/I$ is a Cohen--Macaulay ring -- equivalently, in this situation, $I$ is saturated -- and, moreover,  $I$ is an ideal of linear type, i.e., $\mathcal{R}$ coincides with the symmetric algebra of $I$,  the latter being a complete intersection.

If $d=3$ the map is a de Jonqui\`eres map as a consequence of the well-known equations of condition satisfied by a Cremona map. In this case, saturation follows, e.g., from \cite[Corollary 2.5]{HS}  and Cohen--Macaulayness follows from the ``if'' part of \cite[Theorem 2.7 (iii)]{HS}.

Finally, assume that $d=4$.  By \cite[Theorem 1.5 (i) ]{HS}, the base ideal is saturated. Since the map is not a de Jonqui\`eres map
 there is only one additional proper homaloidal type for this degree, namely, $(4;2^3,1^3)$ (this follows easily from the Hudson test as stated in \cite{alberich}). 
Since we are assuming that three of the base points are proper and non-aligned then the result is proved in \cite[Proposition A.1 and Remark A.3]{CoRaSi}.
\end{proof}

\begin{rem}\rm
If $I$ is not saturated, then to proceed elementarily as in the above argument would require knowing the degrees of a set of generating minimal syzygies of $I$. There is a strong suspicion about how these ought to be, while there is at least one large class  matching this expectation (\cite[Proposition 4.12]{RaSi}).
 Note that, for an arbitrary ideal $I$ the hypothesis that $\mathcal{R}$ is Cohen--Macaulay does not imply that $I$ is saturated.
For example, this is the case of an almost complete intersection $I\subset R$ in a Cohen--Macaulay ring, such that $I$ is a non-saturated generically complete intersection.
It is known -- see, e.g., \cite{Trento} --, that $I$ is of linear type (in particular, has maximal analytic spread) and its symmetric algebra is Cohen--Macaulay.
(Perhaps the simplest explicit example has $R=k[x,y,z]$ and $I$ the ideal generated by the partial derivatives of the binomial $y^2z-x^3$.)

Observe, however, that although the base ideal of a plane Cremona map is an almost complete intersection of maximal analytic spread, it is very rarely generically a complete intersection (\cite[Corollary 3.6]{DHS}).
And in fact, we will prove in Proposition~\ref{CCremona} that, for the base ideal of a plane Cremona map, the hypothesis that $\mathcal{R}$ is Cohen--Macaulay {\em does} imply that $I$ is saturated.
This will allow us to remove the extra assumption in item (a) of the above proposition.
\end{rem}

\subsection{General case: preliminaries}


From previous parts, given a rational map $\mathfrak{F}:X\dasharrow \mathbb{P}^m_k$, the Rees algebra $\mathcal{R}_A(I)$ of one of its base ideals $I\subset A$ depends only on the map and not on this particular base ideal.
Since the biprojective variety defined by $\mathcal{R}_A(I)$ is the graph of $\mathfrak{F}$, if no confusion arises any property of the Rees algebra will be said to be a property of the latter.

Let us denote by $\delta(\mathfrak{F})$  the least possible degree of the forms of a base ideal of the map.
For the next proposition we introduce a new numerical invariant of a rational map $\mathfrak{F}:X\subset \mathbb{P}^n_k \dasharrow  \mathbb{P}^m_k$.
It will be a consequence of the following lemma.

\begin{lem}
Let $I,J\subset A$ denote two base ideals of a rational map $\mathfrak{F}:X\subset \mathbb{P}^n_k \dasharrow  \mathbb{P}^m_k$, where $A$ stands for the homogeneous coordinate ring of $X$. Let $\delta_I, \delta_J$ denote the respective degrees of the generators.
Then
$$\Reg(I^r)-r\delta_I=\Reg(J^r)-r\delta_J,$$
for every $r\geq 1$.
\end{lem}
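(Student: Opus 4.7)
My plan is to produce a graded $A$-module isomorphism $I^{r}\simeq J^{r}(r(\delta_J-\delta_I))$ for every $r\geq 1$ and then invoke the standard behaviour of Castelnuovo--Mumford regularity under a degree shift, namely $\Reg(M(a))=\Reg(M)-a$, which follows from $H^{i}_{\fm}(M(a))=H^{i}_{\fm}(M)(a)$.

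Let $\ff=(f_0,\ldots,f_m)$ and $\gg=(g_0,\ldots,g_m)$ be the representatives with $I=(\ff)$ and $J=(\gg)$. Since both tuples represent the same rational map $\mathfrak{F}$ and $A$ is a domain, I have the Pl\"ucker-type identities $f_ig_j=f_jg_i$ for all $i,j$. I would use these to define an $A$-linear map $\phi\colon I\to A$ by $\phi(f_i):=g_i$: for any relation $\sum a_if_i=0$ and every $j$,
\[
f_j\sum a_ig_i=\sum a_ig_if_j=\sum a_if_ig_j=g_j\sum a_if_i=0,
\]
whence $\sum a_ig_i=0$ because $A$ is a domain. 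The image of $\phi$ is $J$, and a symmetric construction provides an inverse. Since $\phi$ takes a degree-$\delta_I$ generator to a degree-$\delta_J$ element, this is a graded isomorphism $I\simeq J(\delta_J-\delta_I)$. This also matches the description (recalled in Subsection~\ref{Birational_Maps}) of the set of representatives of $\mathfrak{F}$ as the homogeneous part of the rank-one module $\Hom(I,A)$.

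The extension of $\phi$ to powers is essentially automatic once one observes that $\phi$ is concretely multiplication by the homogeneous scalar $g_0/f_0$ in the field of fractions of $A$, of degree $\delta_J-\delta_I$. Multiplication by its $r$-th power is then an $A$-module isomorphism $I^{r}\to J^{r}$ raising degrees by $r(\delta_J-\delta_I)$, yielding the graded isomorphism $I^{r}\simeq J^{r}(r(\delta_J-\delta_I))$. Applying the shift formula gives
\[
\Reg(I^{r})=\Reg\bigl(J^{r}(r(\delta_J-\delta_I))\bigr)=\Reg(J^{r})-r(\delta_J-\delta_I),
\]
and rearranging delivers $\Reg(I^{r})-r\delta_I=\Reg(J^{r})-r\delta_J$.

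The only point that deserves care is the passage from $\phi\colon I\xrightarrow{\sim} J$ to an isomorphism of powers; the interpretation of $\phi$ as multiplication by a unit of the fraction field avoids any subtlety with symmetric vs.\ ordinary powers, since the product $I^{r}$ is literally the $A$-submodule of $A$ generated by the degree-$r$ monomials in $\ff$, which maps bijectively onto the analogous submodule generated by the $\gg$. I foresee no substantive obstacle beyond this bookkeeping.
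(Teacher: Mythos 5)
Your proof is correct and rests on the same essential fact as the paper's: that $I^{r}$ and $J^{r}$ are isomorphic as graded $A$-modules up to the twist $r(\delta_J-\delta_I)$, because the two representatives are proportional over the fraction field. The only (cosmetic) difference is that you realize this isomorphism explicitly as multiplication by a homogeneous element of degree $\delta_J-\delta_I$ and conclude via the local-cohomology shift formula, whereas the paper phrases it as equality of the syzygies of $I^{r}$ and $J^{r}$ and reads the regularity off the resulting minimal free resolutions over the ambient polynomial ring.
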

\begin{proof}
As observed in Section~\ref{Birational_Maps}, $I$ and $J$ have the same syzygies.
This is because being base ideals of the same rational map they come from equivalent representatives in the sense of \cite[Section 2.2]{DHS} -- that is, two sets of homogeneous coordinates of the same point of $ \mathbb{P}^n_{K(A)}$, where $K(R)$ is the total ring of fractions of $A$.
It follows that, for any $r\geq 1$, the powers $I^r$ and $J^r$ are base ideals of another rational map, and hence by the same token they too have the same syzygies.
Writing $A$ as a residue of the polynomial ring $R=k[x_0,\ldots,x_n]$, let
\begin{equation}
\label{resI}
\cdots \lar \oplus_{i}R(-r\,\delta_I-i) \lar R(-r\,\delta_I)^{N_r}\lar I^r\rar 0
\end{equation}
denote the minimal free resolution of $I^r$ over $R$, for suitable $N_r$.
Then the above discussion implies that the minimal free resolution of $J^r$ over $R$ has the form
$$\cdots \lar \oplus_{i}R(-r\,\delta_J-i) \lar R(-r\,\delta_J)^{N_r}\lar J^r\rar 0$$
where the maps and the standard degrees $i$'s are the same as those of (\ref{resI}).

The stated equality follows  immediately from this.
\end{proof}

Clearly, $\Reg(I^r)\geq r\delta_I$  for every $r\geq 1$.
Thus, one gets a numerical function $\mathfrak{f}:\mathbb{N}\rar \mathbb{N}$ which depends solely on the map $\mathfrak{F}$, defined by $\mathfrak{f}(r)=\Reg(I^r)- r\delta_I$, for any choice of a base ideal $I$.
This function attains a maximum which, according to Chardin-Romer (\cite[Theorem 3.5]{Ch}), is the $\xx$-regularity $\Reg_{\xx}(\mathcal{R}_{A}(I))$ introduced in Section~\ref{homological_recap}, thus retrieving another numerical invariant of $\mathfrak{F}$ (since as remarked in Section~\ref{Birational_Maps}, the Rees algebra is independent of the chosen base ideal).

As a parenthesis, a question arises naturally in the birational case, as to what values of $r$ give the maximum of $\mathfrak{f}$ defined by $\mathfrak{f}(r)=\Reg(I^r)- r\delta_I$, for any choice of a base ideal $I$.
It is conceivable that, for arbitrary homogeneous ideals generated in a fixed degree, the behavior is quite erratic.
However, for a base ideal $I$ of a birational map, one knows, for example, that far out powers of $I$ will have quite a bit of linear syzygies, while the regularity is quite sensitive to those.

And, in fact, the maximum may fail to be attained at $r=1$, as is shown by the following example due to  Terai  (stated in \cite[Remark 3]{Conca} and subsequently improved in \cite[Theorem 1.1]{St}):
\begin{example}\label{Ex1}\rm  Let
	$$I=(\mathbf{f}):=(abc,abf,ace,ade,adf,bcd,bde,bef,cdf,cef)\subset A:=\mathbb{Q}[a,b,c,d,e,f].
	$$
\end{example}
The main highlights of this example are:
\begin{enumerate}
	\item[{\rm (1)}] $I$ has linear resolution -- in particular, $\Reg(I)=3$.
	\item[{\rm (2)}] $I^2$ has codepth zero and the last syzygies have standard degree $2$, hence $\Reg(I^2)\geq 12-5=7$ (in fact, one has $=7$).
	\item[{\rm (3)}] $I$ defines a birational map onto the image.
\end{enumerate}
The first two features are directly checked in a computer program.
To see (3), it suffices to know that $I$ is linearly presented (or even still that the linear part of the syzygies has maximal rank, i.e., $9$) and that $\dim[\mathbf{f}]=6$ (maximum).
These two properties imply birationality by \cite[Theorem 3.2]{DHS}.
For the dimension, since we are in characteristic zero, it suffices to check that the Jacobian matrix of $\mathbf{f}$ has rank $6$.

The above question gets streamlined in the particular case  of Cremona maps. Here it might be tempting to guess that the maximum is attained at $r=1$, but at the moment we lack sufficient preliminary evidence.

\subsection{General case: main theorems}

In the case where $\mathfrak{F}$ is birational onto its image, one can state a bound for the minimum degree $\delta(\mathfrak{F})$ of a representative:

\begin{thm}\label{preg}\label{Tbirational}
Let $X\subset \mathbb{P}^n_k$ denote a closed  reduced and irreducible nondegenerate subvariety and let $\mathfrak{F}:X\dasharrow  \mathbb{P}^m_k$ denote a birational map with nondegenerate image $Y$. Let there be given an arbitrary representative of $\mathfrak{F}$, $\delta$ its degree, and $I$ the base ideal generated by the forms of this representative. Then
\begin{equation}\label{bound2}
\delta(\mathfrak{F}) \leq m\,\sup_{r\geq 1}\{\Reg(I^r)-r\delta+1\}.
\end{equation}
If moreover the graph of $\mathfrak{F}$ is Cohen-Macaulay then
\begin{equation}\label{bound21}
\delta(\mathfrak{F}) \leq m\,\sup_{r\geq 1}\{\Reg(I^r)-r\delta+1\} \leq m\,d
\end{equation}
and, similarly,
\begin{equation}\label{bound22}
\delta(\mathfrak{F^{-1}})  \leq n\,d,
\end{equation}
  where $d=\Dim(X)+1$. 
\end{thm}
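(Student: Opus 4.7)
\emph{Proof plan.} The main tool will be Theorem~\ref{Tcriterion}, which I invoke in two symmetric ways. Applied to $\mathfrak{F}$ itself, it produces a representative of $\mathfrak{F}^{-1}$ as the signed $n$-minors of an $n\times(n+1)$ submatrix of the Jacobian (with respect to $\XX$) of a minimal generating set of the ideal $(\mathcal{J}_{1,*})$; since these generators have bidegrees $(1,q_i)$, the entries of this Jacobian are forms in $B$ of $\yy$-degree $q_i$, giving $\delta(\mathfrak{F}^{-1})\le n\cdot\deg_{\yy}(\mathcal{J}_{1,*})$. Symmetrically, observing that $\mathcal{R}_A(I)$ and $\mathcal{R}_B(J)$ both present the same bigraded coordinate ring of the graph $\Gamma_{\mathfrak{F}}\subset\PP^n_k\times\PP^m_k$, with the two bigradings exchanged under swapping of factors, I apply the same theorem to $\mathfrak{F}^{-1}$ to obtain a representative of $\mathfrak{F}$ of degree at most $m\cdot\deg_{\xx}(\mathcal{J}_{*,1})$.

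To derive (\ref{bound2}), I identify $(\mathcal{J}_{*,1})$ with the linear syzygies of $I$: its minimal generators of bidegree $(a,1)$ correspond bijectively to minimal first syzygies of $I$ of total degree $\delta+a$. Reading off the shift at homological degree one in a minimal graded resolution of $I$ gives $\delta+\max a-1\le\Reg(I)$, hence $\deg_{\xx}(\mathcal{J}_{*,1})=\max a\le\Reg(I)-\delta+1\le\sup_{r\ge 1}\{\Reg(I^r)-r\delta+1\}$ (taking $r=1$ in the supremum). Combining with the first-paragraph estimate yields (\ref{bound2}).

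For (\ref{bound21}) and (\ref{bound22}), assume the graph is Cohen--Macaulay. The quantity $\sup_{r\ge 1}\{\Reg(I^r)-r\delta\}$ was already identified earlier, through the Chardin--R\"omer theorem, with the $\xx$-regularity $\Reg_{\xx}(\mathcal{R}_A(I))$. The key reduction is therefore to prove $\Reg_{\xx}(\mathcal{R}_A(I))\le d-1$ under the Cohen--Macaulay hypothesis. Here $\mathcal{R}_A(I)$ has dimension $d+1$, and birationality forces the analytic spread $\ell(I)=d$, so that $(\xx)\mathcal{R}_A(I)$ has height $(d+1)-d=1$ in the Cohen--Macaulay ring $\mathcal{R}_A(I)$; grade equals height under Cohen--Macaulayness, and the $a$-invariants of the $\xx$-local cohomology of $\mathcal{R}_A(I)$ are in turn controlled by the reduction number of $I$ and the special-fiber dimension $\ell(I)=d$ through standard estimates for CM bigraded algebras, producing the required bound. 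This proves (\ref{bound21}). Inequality (\ref{bound22}) then follows by a fully symmetric argument from the $B$-side: the same Cohen--Macaulay graph has analytic spread $d$ from the perspective of $J\subset B$, so by the identical chain of estimates $\deg_{\yy}(\mathcal{J}_{1,*})\le d$, which combined with the first-paragraph bound produces $\delta(\mathfrak{F}^{-1})\le n\cdot d$.

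The hardest step will be the Cohen--Macaulay regularity bound $\Reg_{\xx}(\mathcal{R}_A(I))\le d-1$; this is where the delicate bigraded local cohomology analysis enters, and presumably where the ``crucial lemma'' previewed in the introduction is brought to bear.
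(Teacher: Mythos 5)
Your overall architecture — apply Theorem~\ref{Tcriterion} in both directions across the graph, identify $\sup_{r}\{\Reg(I^r)-r\delta\}$ with $\Reg_{\xx}(\mathcal{R}_A(I))$ via Chardin--R\"omer, and exploit the symmetry of the two bigradings — matches the paper. For (\ref{bound2}) you actually take a different and more direct route: the paper bounds the $\xx$-degrees of the generators of $((\mathcal{J}_{I'})_{(*,1)})$ by the \emph{relation type} of $I'$ over $B$, then by $\Reg(\mathcal{R}_B(I'))+1=\Reg_{\xx}(\mathcal{R}_A(I))+1$, whereas you identify that degree-$(*,1)$ piece outright with the first syzygies of $I$ and bound their degrees by $\Reg(I)-\delta+1$. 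If carried out carefully this is legitimate and in fact yields the \emph{sharper} statement $\delta(\mathfrak{F})\le m(\Reg(I)-\delta+1)$, of which (\ref{bound2}) is the $r=1$ weakening; the one detail you must add is that $\Reg(I)$ and the syzygy shifts are to be read off a minimal resolution over the ambient polynomial ring $k[\XX]$ (not over $A$, which need not be regular), together with the observation that the $A$-syzygy module of $(\ff)$ is a quotient of the first syzygy module of $I$ as a $k[\XX]$-module, so its generator degrees are still controlled by $\Reg(I)+1$.

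The genuine gap is in the Cohen--Macaulay part, which is the heart of the theorem. You correctly reduce everything to $\Reg_{\xx}(\mathcal{R}_A(I))\le d-1$, but you do not prove it: the mechanism you sketch — that $(\xx)\mathcal{R}_A(I)$ has height one and that the $(\xx)$-local cohomology is ``controlled by the reduction number and $\ell(I)$ through standard estimates for CM bigraded algebras'' — is not a standard estimate in the form you need, and the paper's Lemma~\ref{lregR} concerns the regularity of $\mathcal{R}_A(I)$ with respect to its \emph{Rees-irrelevant} ideal, not with respect to $(\xx)$, so it does not apply here directly. The actual argument is the grading swap you set up in your first paragraph but then abandon: under the isomorphism $\mathcal{R}_B(I')\simeq\mathcal{R}_A(I)$ of bigraded algebras, the $(\xx)$-regularity of $\mathcal{R}_A(I)$ \emph{is} the ordinary Castelnuovo--Mumford regularity of $\mathcal{R}_B(I')$ with respect to its irrelevant ideal $\mathcal{R}_B(I')_+=(\xx)\mathcal{R}_B(I')$, and for a Cohen--Macaulay Rees algebra one has $\Reg(\mathcal{R}_B(I'))\le\ell(I')-1$ by \cite[Proposition 6.2]{AHT}; since birationality gives $\ell(I')=\dim(X)+1=d$, this yields $\Reg_{\xx}(\mathcal{R}_A(I))\le d-1$ at once, and (\ref{bound22}) follows by running the same identification with the roles of $\xx$ and $\yy$ exchanged. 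Without importing this external input (or proving an equivalent bound yourself), your proof of (\ref{bound21}) and (\ref{bound22}) is incomplete.
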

\begin{proof}
Let $A$ and $B$ stand, respectively,  for the homogeneous coordinate rings of $X$ and $Y$.
Let $I'\subset B$ denote the ideal generated by the forms of a representative of the inverse map to $\mathfrak{F}$.
Consider a presentation  $\mathcal{R}_{B}(I')\simeq B[x_0,\cdots,x_n]/\mathcal{J}_{I'}$, where $B$ is concentrated in degree zero.  According to Theorem \ref{Tcriterion}, the map $\mathfrak{F}$, considered as the inverse map to its own inverse, has a representative whose coordinates are suitable $m\times m$ minors of the Jacobian  matrix ${\rm Jacob}_{\yy}((\mathcal{J}_{I'})_{(\ast,1)})$ and, clearly, the degree of these coordinates is an upper bound for the minimum degree $\delta(\mathfrak{F})$ of a representative.
Therefore,  to bound $\delta(\mathfrak{F})$ one is led to bounding from above the $\xx$-degree of the minimal generators of $\mathcal{J}_{I'}$, i.e. the relation type of the ideal $I'\subset B$.

Now, quite generally, the relation type of the ideal $I'\subset B$ is bounded by $\Reg(\mathcal{R}_{B}(I'))+1$, where the regularity is computed with respect to the the irrelevant ideal $\mathcal{R}_{B}(I')_+$ (\cite[15.3.1]{BS}).

We claim that $\Reg_{\xx}(\mathcal{R}_{A}(I)) =\Reg(\mathcal{R}_{B}(I'))$.

For this, we note that birationality implies an  isomorphism  $\mathcal{R}_{B}(I')\simeq \mathcal{R}_{A}(I)$ of bigraded $k$-algebras (\cite[Proposition 2.1]{S}) naturally based on the standard bigraded structure of the polynomial ring $R[\yy]=k[\xx,\yy]$.
If one changes this grading by setting  $\deg(\yy)=0$ and $\deg(\xx)=1$, it obtains an isomorphism of standard graded $k$-algebras.
It follows that $H^i_{(\xx)}(\mathcal{R}_{B}(I'))\simeq H^i_{(\xx)}((\mathcal{R}_{A}(I))$ for every $i$.
Since $\Reg(\mathcal{R}_{B}(I'))+1$ is computed with respect to the the irrelevant ideal $\mathcal{R}_{B}(I')_+$, it is
 defined in terms of local cohomology based on the ideal $B[\xx]_+=(\xx)$.
 Then the above  isomorphisms of local cohomology modules impliy that $\Reg_{\xx}(\mathcal{R}_{A}(I)) =\Reg(\mathcal{R}_{B}(I'))$.

As a consequence,   $\delta(\mathfrak{F})\leq m(\Reg_{\xx}(\mathcal{R}_{A}(I))+1)$. Applying Chardin-Romer 's equality mentioned earlier, we obtain the first inequality.

If, in addition, the graph of $\mathfrak{F}$ is Cohen-Macaulay, then the isomorphism  $\mathcal{R}_{B}(I')\simeq \mathcal{R}_{A}(I)$ of bigraded structures implies that $\mathcal{R}_{B}(I')$ is Cohen-Macaulay. By \cite[Proposition 6.2]{AHT}, one has $\Reg(\mathcal{R}_{B}(I'))\leq \ell(I')-1$. Since  $\ell(I')=\dim(X)+1$ we are through for the second bound.

The third bound is a consequence of the symmetry of the argument and the fact that both $\mathfrak{F}$ and its inverse share the same graph.
\end{proof}

We note that the bounds just obtained are expressed solely in terms of the ambient dimension, whereas the Cohen-Macaulayness of the graph is a property of the map.

For the subsequent result we will use the following lemma -- essentially a collection of known results in a form that suits our purpose in this part.

\begin{lem}\label{lregR}Let $(A,\fm)$ be a Cohen--Macaulay local ring of dimension $d$ and let $I$ be an ideal of $A$ with analytic spread $\ell$ and reduction number $r(I)$. If $\mathcal{R}_A(I)$ is Cohen--Macaulay, one has
$$
\Reg(\mathcal{R}_A(I))\leq \left \{
\begin{array}{cc}
r(I)& {\text if} \,\ell=\Ht(I) \\
\max\{r(I),\ell-2\}& {\text otherwise}.
\end{array}
\right.$$
\end{lem}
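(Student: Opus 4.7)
My approach would chain together three standard facts from the theory of Rees algebras, moving progressively from the Cohen--Macaulayness of $\mathcal{R}:=\mathcal{R}_A(I)$ to an estimate involving the reduction number and the analytic spread of $I$. First, I would invoke the two short exact sequences
$$0 \to \mathcal{R}_+ \to \mathcal{R} \to A \to 0, \qquad 0 \to \mathcal{R}_+(1) \to \mathcal{R} \to \mathcal{G}_I(A) \to 0,$$
(the leftmost term of the second sequence being just $I\mathcal{R}$), combined with the hypothesis that $A$ is Cohen--Macaulay of dimension $d$, and chase depths through the associated long exact sequences of local cohomology with respect to $\mathfrak{M}$. The conclusion is that $\mathcal{G}_I(A)$ is itself Cohen--Macaulay of dimension $d$.

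Once this is established, only the top local cohomology $H^d_{\mathfrak{M}}(\mathcal{G}_I(A))$ is nonzero, and a classical formula of Trung (building on work of Hoa, Ooishi and Johnson--Ulrich) yields $\Reg(\mathcal{G}_I(A)) = r(I)$ after the usual harmless extension to an infinite residue field. I would then compare the regularities of $\mathcal{R}$ and of $\mathcal{G}_I(A)$ directly through the long exact sequence of $H^*_{\mathfrak{M}}$ attached to $0 \to \mathcal{R}_+(1) \to \mathcal{R} \to \mathcal{G}_I(A) \to 0$, together with the vanishing $H^i_{\mathfrak{M}}(\mathcal{R}) = 0$ for $i \leq d$ afforded by the Cohen--Macaulayness of $\mathcal{R}$. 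In the general case this produces the bound
$$\Reg(\mathcal{R}) \leq \max\{\Reg(\mathcal{G}_I(A)), \ell(I) - 2\},$$
where the $\ell(I) - 2$ addend reflects the grading shift between $\mathcal{R}$ and $\mathcal{R}_+(1)$ and the way the $a^*$-invariants at the two ends of the sequence propagate. In the equimultiple situation $\ell(I) = \Ht(I)$, a minimal reduction of $I$ may be chosen as an honest regular sequence of $A$, which simplifies the cohomological bookkeeping and eliminates the $\ell(I) - 2$ term, giving the sharper $\Reg(\mathcal{R}) \leq r(I)$.

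The main technical obstacle lies in this last comparison step, specifically in pinning down the $\ell(I) - 2$ correction and its removal under equimultiplicity. The delicate point is the precise interplay between the top $a^*$-invariants $a^*_{d+1}(\mathcal{R})$ and $a^*_d(\mathcal{G}_I(A))$: naively combining them in the long exact sequence yields an extra $+1$ that must be absorbed by the shift in $\mathcal{R}_+(1)$, and the equimultiple sharpening hinges on the fact that for $\ell(I) = \Ht(I)$ a minimal reduction is a regular sequence in $A$, which cleanly propagates the Cohen--Macaulayness of $A/J$ into the local cohomology of the fiber cone and kills the contribution of $\ell(I) - 2$.
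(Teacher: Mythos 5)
Your opening step (that $\mathcal{G}:=\mathcal{G}_I(A)$ is Cohen--Macaulay) is fine, though the paper simply quotes Trung--Ikeda for it and in fact needs the sharper output of that theorem, namely $a^*_d(\mathcal{G})\leq -1$, which your depth chase does not record. The genuine gap is in your middle step: the claimed ``classical formula'' $\Reg(\mathcal{G})=r(I)$ for Cohen--Macaulay $\mathcal{G}$ is not true in this generality. It does hold for $\fm$-primary ideals (Ooishi), but for general $I$ the regularity in the statement is computed from local cohomology with respect to the irrelevant ideal $\mathcal{G}_+$, not with respect to $\mathfrak{M}$; Cohen--Macaulayness concentrates the $\mathfrak{M}$-cohomology in degree $d$ but says nothing so clean about the modules $H^i_{\mathcal{G}_+}(\mathcal{G})$, which can be nonvanishing for many $i$ and are sensitive to the behavior of $I$ at non-maximal primes. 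This is precisely why the lemma has two cases: if your formula were correct, the $\ell-2$ term would never be needed. The paper instead invokes Aberbach--Huneke--Trung, which bounds $\Reg(\mathcal{G})$ by $\max\{r_{\ell-1}(I),r(I)\}$ where $r_{\ell-1}(I)$ is built from \emph{local} reduction numbers $r(I_{\fp})-\ell(I_{\fp})$, and then combines the $a$-invariant formula of Herrmann--Hyry--Korb with $a^*_d(\mathcal{G})\leq -1$ to force $r(I_{\fp})-\ell(I_{\fp})\leq -1$ for all $\fp\in V(I)$, whence $r_{\ell-1}(I)\leq \ell-2$.

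Your final step also misplaces the source of the $\ell-2$ correction. The passage from $\mathcal{G}$ to $\mathcal{R}$ costs nothing: by Trung's theorem one has $\Reg(\mathcal{R})=\Reg(\mathcal{G})$ exactly, with no extra term arising from the grading shift in $0\to\mathcal{R}_+(1)\to\mathcal{R}\to\mathcal{G}\to 0$. The $\ell-2$ lives entirely inside the bound on $\Reg(\mathcal{G})$ and reflects localization at primes of height at most $\ell-1$, not the comparison of the two blowup algebras. Likewise, the equimultiple case is not settled by choosing a minimal reduction that is a regular sequence; it is simply the case in which the Aberbach--Huneke--Trung invariant $r_{\ell-1}(I)$ equals $-1$ by definition. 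To repair your argument you would need to replace the middle step by these localization-sensitive bounds; as written the proof does not go through.
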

\begin{proof}
Set $\mathcal{R}:=\mathcal{R}_A(I)$ and $\mathcal{G}:=\mathcal{G}_I(A)$. Since both $A$ and $\mathcal{R}$ are Cohen-Macaulay then $\mathcal{G}$ is Cohen-Macaulay and moreover $a^*_d(\mathcal{G})\leq -1$ \cite[Theorem 1.1]{TI}. According to   \cite[Corollary 4.2]{HHK}, one has   $$a_d^*(\mathcal{G})=\max\{r(I)-\ell(I),a(\mathcal{G}_{\fp})\,|\, \fp\in V(I)\setminus {\fm}\}$$
where  $a(\mathcal{G}_{\fp})$ is the $a$-invariant of $\mathcal{G}_{\fp}$ over $A_{\fp}$.
A recursion then yields
\begin{equation}\label{eHHK}
a_d^*(\mathcal{G})=\max\{r(I_{\fp})-\ell(I_{\fp})\,|\, \fp\in V(I)\}.
\end{equation}

On the other hand, by \cite[Corollary 4.11]{AHT}, $\Reg(\mathcal{G})\leq \max\{r_{\ell-1}(I),r(I)\}$, where $$
 r_{\ell-1}(I)=\left \{
\begin{array}{ll}
-1, & {\rm if} \;\ell=\Ht(I) \\
\max\{r(I_{\fp})-\ell(I_{\fp}): \Ht(\fp)\leq \ell-1 \text{ and  }\ell(I_{\fp})= \Ht(\fp) \}+\ell-1, & \text{ otherwise.}
\end{array}
\right.$$

Since  $a^*_d(\mathcal{G})\leq -1$, (\ref{eHHK}) implies that $r(I_{\fp})-\ell(I_{\fp})\leq -1$ for all $\fp\in V(I)$. Therefore $r_{\ell-1}(I)\leq \ell-2$ if $\ell\not=\Ht(I)$. The assertion now follows  because $\Reg(\mathcal{R})=\Reg(\mathcal{G})$ by \cite[Corollary 3.3]{T}.
\end{proof}

\begin{thm}\label{CCremona} {\rm (}$k$ infinite{\rm )} Let   $\mathfrak{F}:\mathbb{P}^n_k\dashrightarrow\mathbb{P}^n_k$  be a Cremona map having Cohen--Macaulay graph.
Then the respective base ideals of $\mathfrak{F}$ and its inverse are generated in degree $\leq n^2$.

Furthermore, if $I\subset A=k[x_0,\cdots,x_n]$ stands for the base ideal of $\mathfrak{F}$, then the saturation of $I$ is the ideal $(I:_A A_+ ^{n-2})$.
In particular, the base ideal of a plane Cremona map with Cohen--Macaulay graph is saturated.
\end{thm}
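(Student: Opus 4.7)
The plan is to refine the bound of Theorem~\ref{Tbirational} by invoking Lemma~\ref{lregR}. Let $I\subset A=k[x_0,\ldots,x_n]$ and $I'\subset B=k[y_0,\ldots,y_n]$ be the base ideals of $\mathfrak{F}$ and $\mathfrak{F}^{-1}$ respectively; since each map is given by $n+1$ linearly independent forms whose image spans $\mathbb{P}^n$, both ideals are minimally generated by $n+1$ forms and each has maximal analytic spread $\ell=n+1$. Under the Cohen--Macaulay hypothesis the bigraded Rees algebras $\mathcal{R}_A(I)\simeq \mathcal{R}_B(I')$ are Cohen--Macaulay. Because $k$ is infinite and $\mu(I')=\ell(I')$, any minimal reduction of $I'$ has $n+1$ generators, so $I'$ is its own minimal reduction and $r(I')=0$.

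Now apply Lemma~\ref{lregR} at the localization $B_{B_+}$ (using the graded/$^*$local correspondence of Section~\ref{homological_recap}):
$$\Reg(\mathcal{R}_B(I'))\leq \max\{r(I'),\,\ell(I')-2\}=\max\{0,\,n-1\}=n-1,$$
the case $\ell(I')>\Ht(I')$ being the generic one, while $\ell(I')=\Ht(I')$ corresponds to a linear Cremona and yields the even better bound $0$. Combining this with the inequality $\delta(\mathfrak{F})\leq n(\Reg(\mathcal{R}_B(I'))+1)$ extracted from the proof of Theorem~\ref{Tbirational}, one obtains $\delta(\mathfrak{F})\leq n\cdot n=n^2$. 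The symmetric bound $\delta(\mathfrak{F}^{-1})\leq n^2$ follows by interchanging the roles of $\mathfrak{F}$ and $\mathfrak{F}^{-1}$, since Cohen--Macaulayness of the graph is symmetric.

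For the saturation identity, the plan is to exploit the bigraded estimate $\Reg_{\xx}(\mathcal{R}_A(I))=\Reg(\mathcal{R}_B(I'))\leq n-1$. By Chardin--Romer this encodes $\Reg(I^r)\leq rd+n-1$ for all $r\geq 1$, and from the exact sequence $0\to I\to A\to A/I\to 0$ together with $H^i_{A_+}(A)=0$ for $i<n+1$ one has $H^0_{A_+}(A/I)\simeq H^1_{A_+}(I)$. The goal is to deduce $A_+^{n-2}\cdot H^0_{A_+}(A/I)=0$, which is equivalent to the desired identity $I^{\mathrm{sat}}=(I:_A A_+^{n-2})$. For $n=2$ this specializes to $(I:A_+^0)=I=I^{\mathrm{sat}}$, which is the saturation statement in the plane case.

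The main obstacle will be the saturation step. The degree bound essentially combines Lemma~\ref{lregR} with the reduction-number vanishing $r(I')=0$ and the estimates of Theorem~\ref{Tbirational}, but the sharp exponent $n-2$ in the colon does not drop out of a naive bound on $\End H^0_{A_+}(A/I)$ alone: one must track both ends of the graded socle-type module $H^0_{A_+}(A/I)$, likely combining a lower bound on $\beg H^0_{A_+}(A/I)$ coming from the almost complete intersection structure of base ideals of Cremona maps with a tight control of the $a^*$-invariants of $\mathcal{G}_I(A)$ in the bigraded sense. It is this delicate bigraded bookkeeping, rather than the degree bound itself, where the real subtlety of the argument lies.
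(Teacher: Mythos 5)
Your degree bound is correct and follows the paper's route exactly: $r(I')=0$ because $\mu(I')=\ell(I')=n+1$ over an infinite field, Lemma~\ref{lregR} gives $\Reg(\mathcal{R}_B(I'))\leq n-1$, hence relation type $\leq n$, and the mechanism of Theorem~\ref{Tbirational} (minors of the Jacobian of the bidegree-$(\ast,1)$ part) yields $\delta(\mathfrak{F})\leq n\cdot n=n^2$, with the symmetric statement for the inverse. No complaints there.

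The saturation part, however, has a genuine gap that you yourself flag but do not close. Your upper bound $\End\bigl(H^0_{A_+}(A/I)\bigr)=\End(I^{\mathrm{sat}}/I)\leq \delta+n-2$ is exactly what the paper extracts from $\Reg_{\xx}(\mathcal{R}_A(I))\leq n-1$ via Chardin--Romer, and it is the \emph{only} cohomological input needed; there is no ``delicate bigraded bookkeeping'' of $a^*$-invariants of $\mathcal{G}_I(A)$ beyond this. What is missing is the complementary lower bound $\beg(I^{\mathrm{sat}}/I)\geq \delta+1$, valid for the base ideal of any Cremona map. You correctly sense that such a lower bound is required (``combining a lower bound on $\beg H^0_{A_+}(A/I)$ coming from the almost complete intersection structure''), but you neither prove it nor point to a source, and it is not a formal consequence of $I$ being an almost complete intersection generated in degree $\delta$ — a general such ideal can have socle-type elements in degree $\delta$ or below. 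The paper closes this gap by citing Pan--Russo \cite[Proposition 1.2(i)]{PR} (see also \cite[Proposition 1.3.6]{D}). Once both bounds are in hand, $I^{\mathrm{sat}}/I$ is concentrated in degrees $[\delta+1,\,\delta+n-2]$, so $(A_+)^{n-2}(I^{\mathrm{sat}}/I)=0$, i.e.\ $I^{\mathrm{sat}}=(I:_A A_+^{n-2})$, and for $n=2$ the degree window is empty, forcing $I^{\mathrm{sat}}=I$. Without the initial-degree estimate the argument does not go through, so this step must be supplied, not merely anticipated.
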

\begin{proof}
Set $I\subset A$ for the base ideal of the map.
 Since the generators of $I$ are analytically independent and $k$ is infinite, the reduction number $r(I)=0$. From Lemma~\ref{lregR} one has $\Reg(\mathcal{R}_A(I))\leq n-1$, hence the relation type of $I$ is $\leq n$. Therefore the same reasoning as in the proof of Theorem \ref{Tbirational} shows that $\delta(\mathfrak{F})$ and $\delta(\mathfrak{F}^{-1})$  are at most $n^2$.

As earlier, twisting around by considering the graph of the inverse map, we derive the inequality $\Reg_{\xx}(\mathcal{R}_{A}(I))\leq n-1$.
It then follows from Chardin-Romer's equality that  $\Reg(I)-\delta\leq n-1$ where $\delta$ is the degree of the generators of $I$. Therefore  $\Reg(A/I)\leq \delta+n-2$. In particular $\End(H^0_{(\xx)}(A/I))=\End(I^{sat}/I)\leq \delta+n-2$. On the other hand, by \cite[Proposition 1.2 (i)]{PR} (also  \cite[Proposition 1.3.6]{D}) one always has $\beg(I^{sat}/I)\geq \delta+1$ whenever $I$ is the base ideal of a Cremona map. Therefore $(A_+) ^{n-2}\frac{I^{sat}}{I}=0$ which proves the assertion.
\end{proof}

\subsection{Confronting a na\"ive upper bound} 

In this short piece we deviate somewhat from our main objectives by  giving an upper bound for the degree of a linear system defining a rational map, under the assumption  that it generates an ideal of grade at least $2$ -- an assumption meaning that the map has essentially a unique base ideal.

\begin{prop}\label{grade2}
Let $\mathfrak{F}: X\subset \pp^n_k\dasharrow \mathbb{P}^m_k$ denote a rational map. Suppose that $\mathfrak{F}$ admits a base ideal $I\subset A$ of grade at least $2$. If $\delta$ denote the degree of the forms generating $I$, one has
$$\delta\leq \frac{m}{m+1}\, e,
$$
where $e$ is the largest twist in the graded presentation of $I$ over $A$.
\end{prop}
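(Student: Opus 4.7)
The plan is to extract a Hilbert--Burch style identity from the graded presentation, using $\grade I\geq 2$ to force the key relation to live in $A$ rather than in the fraction field of $A$.

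Write a graded presentation of $I$ over $A$ in the form
$$\bigoplus_{j=1}^{s}A(-e_j)\xrightarrow{\Phi}A(-\delta)^{m+1}\xrightarrow{(f_0,\ldots,f_m)}I\to 0,$$
with $e=\max_j e_j$. Since $\grade I\geq 2$, the long exact sequence obtained from $0\to I\to A\to A/I\to 0$ by applying $\Hom_A(-,A)$ collapses to $\Hom_A(I,A)\cong A$, generated by the inclusion $\iota\colon I\hookrightarrow A$. Dualizing the presentation and tracking $\iota$ through the identification then shows that
$$\ker\Phi^{*}=A\cdot(f_0,\ldots,f_m)\subset A(\delta)^{m+1}.$$

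Next, the first syzygy module of $I$ has rank $m$, so one may select an $(m+1)\times m$ submatrix $\phi$ of $\Phi$ of rank $m$, with columns in positions $j_1,\ldots,j_m$. Let $d_i$ denote the signed $m\times m$ minor of $\phi$ obtained by deleting row $i$. Laplace expansion yields $\sum_i(-1)^i d_i\,\phi_{ij_k}=0$ for each $k$, so the vector $\mathbf{d}:=(d_0,-d_1,\ldots,(-1)^m d_m)$ is annihilated by every column of $\phi$. Since $\Phi$ also has rank $m$, each of its columns is a $\operatorname{Frac}(A)$-combination of the columns of $\phi$; thus $\mathbf{d}$ is annihilated by every column of $\Phi$ as well, i.e.\ $\mathbf{d}\in\ker\Phi^{*}$. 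By the previous step there is a unique $a\in A$ with $\mathbf{d}=a\cdot(f_0,\ldots,f_m)$.

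Because $\phi$ has rank $m$, some $d_i$ is nonzero, so $a\neq 0$ and consequently $\deg a\geq 0$. Matching graded degrees, $\deg d_i=\sum_k(e_{j_k}-\delta)$ while $\deg f_i=\delta$, so
$$\deg a=\sum_{k=1}^{m}e_{j_k}-(m+1)\delta\geq 0.$$
Using $e_{j_k}\leq e$ for each $k$, this gives $(m+1)\delta\leq me$, which is the claimed bound. I expect the only real subtlety to be in the second paragraph: one must certify both that $\ker\Phi^{*}$ is no larger than $A\cdot(f_0,\ldots,f_m)$ and that the minor vector $\mathbf{d}$ actually sits in it. The grade-$2$ hypothesis is what accomplishes the former, upgrading a mere $\operatorname{Frac}(A)$-level proportionality to a genuine $A$-level one; the rank choice for $\phi$ is what prevents the resulting coefficient from collapsing to zero.
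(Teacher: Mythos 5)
Your argument is correct and is essentially the paper's own proof: the paper likewise dualizes the presentation, uses $\grade(I)\geq 2$ to identify $\Hom_A(I,A)$ with $A\cdot(f_0,\ldots,f_m)$, shows the vector of signed maximal minors of a rank-$m$ submatrix of the syzygy matrix lies in $\ker\Phi^{*}$ (via vanishing of $(m+1)\times(m+1)$ determinants), and concludes by the same degree count $hf_i=\Delta_i$. The only cosmetic difference is that the paper verifies $\mathbf{d}\in\ker\Phi^{*}$ by expanding against every row of the full syzygy matrix directly, rather than passing through $\operatorname{Frac}(A)$-combinations of the chosen columns.
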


The proposition is an immediate consequence of the following general lemma which seems to be folklore.

\begin{lem}\label{grade2_nonsense}
Let $R$ be a commutative Noetherian ring and $I\subset R$ be an ideal of grade at least two. Fixing a set $\{f_0,\ldots ,f_m\}$ of generators of $I$, let $M$ denote a corresponding presentation matrix. Then there exists an $(m+1)\times m$ submatrix of $M$ of rank $m$,  and a non-zero element $h\in R$ such that $hf_i=\Delta_i$, where $\Delta_i$ denotes the $i$-th minor of this submatrix.

In particular, if $R$ is standard graded and if $I$ is homogenous and generated in a single degree $\delta$ and related in degree at most $b_1(I)$ then $b_1(I)\geq \frac{m+1}{m}\,\delta$.
\end{lem}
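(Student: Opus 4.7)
The plan is to exploit that $\grade(I)\geq 1$ makes $I$ a rank-one $R$-module, locate an $(m+1)\times m$ submatrix $N$ of $M$ of rank $m$, produce two natural vectors in its left kernel, and then use the stronger hypothesis $\grade(I)\geq 2$ to promote the proportionality constant between them from the total ring of fractions of $R$ back to $R$. Concretely, since $I$ contains a non-zero-divisor, tensoring the presentation $R^{p}\xrightarrow{M}R^{m+1}\twoheadrightarrow I$ with the total ring of fractions $K$ shows that $M$ has rank $m$ over $K$, so $m$ suitable columns of $M$ assemble into the desired $N$. The row $f=(f_{0},\ldots,f_{m})$ satisfies $fN=0$ because its columns are syzygies on the $f_{i}$, while the row $v=(\Delta_{0},-\Delta_{1},\ldots,(-1)^{m}\Delta_{m})$ of signed maximal minors of $N$ also satisfies $vN=0$, as seen by appending any column of $N$ to itself and expanding the resulting vanishing determinant. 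Both $f$ and $v$ are nonzero over $K$ and lie in the rank-one left kernel of $N$, so there is a unique $\lambda\in K$ with $\lambda f_{i}=(-1)^{i}\Delta_{i}$ for each $i$.

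The crucial step, and what I expect to be the only real obstacle, is now promoting $\lambda$ from $K$ to $R$. Since $\lambda f_{i}\in R$ for every $i$, one has $\lambda I\subset R$, so multiplication by $\lambda$ defines an $R$-linear map $\phi\colon I\to R$. Applying $\Hom_{R}(-,R)$ to $0\to I\to R\to R/I\to 0$ and using that $\grade(I)\geq 2$ is equivalent to $\Hom_{R}(R/I,R)=\Ext^{1}_{R}(R/I,R)=0$ yields the canonical isomorphism $R=\Hom_{R}(R,R)\simeq\Hom_{R}(I,R)$; hence $\phi$ is multiplication by some (unique) $h\in R$, producing $hf_{i}=(-1)^{i}\Delta_{i}$. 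Absorbing signs into the minors $\Delta_{i}$ (which are still maximal minors of the chosen submatrix up to sign) delivers the asserted relation $hf_{i}=\Delta_{i}$, and non-vanishing of $h$ follows from non-vanishing of any $\Delta_{i}$, which in turn is forced by $N$ having rank $m$.

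For the ``in particular'' part, assume $R$ is standard graded with $I$ minimally generated by forms $f_{i}$ of common degree $\delta$, and let $d_{1},\ldots,d_{m}$ be the standard degrees of the columns of $N$, each bounded by $b_{1}(I)$. Every entry in the $j$-th column is then homogeneous of degree $d_{j}-\delta$, so each maximal minor of $N$ is homogeneous of degree $\sum_{j}(d_{j}-\delta)$. Comparing this with $\deg h+\delta$ in $hf_{i}=\Delta_{i}$ and using $\deg h\geq 0$ gives $\sum_{j}d_{j}\geq(m+1)\delta$, and since each $d_{j}\leq b_{1}(I)$ one concludes $m\,b_{1}(I)\geq(m+1)\delta$, which is the claimed inequality. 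Without the second-grade hypothesis, one would only recover $\lambda$ in the fractional inverse $I^{-1}\subset K$, and the entire graded comparison would collapse.
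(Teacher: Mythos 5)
Your proposal is in substance the same as the paper's argument: both take the signed maximal minors of a rank-$m$ submatrix $N$ consisting of $m$ columns of $M$, observe that this vector and $\mathbf{f}=(f_0,\ldots,f_m)$ are both left-annihilators of $N$, and use $\grade(I)\geq 2$ -- in your version through $\Hom_R(I,R)=R$, in the paper's through the equivalent exactness of $0\to R\xrightarrow{\mathbf{f}^t}R^{(m+1)*}\xrightarrow{M^t}R^{p*}$ -- to land the proportionality factor in $R$. The degree count in the graded case is also identical. There is, however, one step that does not survive the stated level of generality, namely the claim that two nonzero vectors lying in the ``rank-one left kernel of $N$'' over the total ring of fractions $K$ must be proportional. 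When $R$ is not a domain, $K$ is merely a semilocal ring with zerodivisors, and a rank-one module over such a ring can contain nonzero elements that are not $K$-multiples of each other (already in $K=k\times k$ the elements $(1,0)$ and $(0,1)$ are not proportional). What you actually need is that $\mathbf{f}$ \emph{generates} the left kernel of $N\otimes K$; this does not follow from $\mathbf{f}\neq 0$, and it also imposes a stronger requirement on the choice of $N$ (one needs $I_m(N)$ to contain a nonzerodivisor rather than merely to be nonzero, and the existence of a single $m$-subset of columns achieving this simultaneously at all minimal primes is not immediate from ``$M$ has rank $m$ over $K$'').

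The paper avoids this issue entirely by never passing to $K$ for the proportionality: it checks that the signed-minor vector is annihilated by \emph{every} row of $M^t$, not just those of $N^t$. Indeed, Laplace expansion against a row of $N^t$ produces a determinant with a repeated row, while expansion against any other row produces an $(m+1)\times(m+1)$ minor of $M$, and all such minors vanish because $I_{m+1}(M)=\mathrm{Fitt}_0(I)\subseteq\Ann(I)=0$ (as $I$ contains a nonzerodivisor). Hence the signed-minor vector lies in $\Ker(M^t)=\Image(\mathbf{f}^t)=R\cdot\mathbf{f}$ outright, with no condition on $N$ beyond $I_m(N)\neq 0$, which is needed only to guarantee $h\neq 0$. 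Since the paper applies the lemma with $R$ a domain, your argument does close in that setting; but as a proof of the lemma as stated for an arbitrary commutative Noetherian ring, the proportionality-over-$K$ step is a genuine gap, and the repair is exactly the paper's expansion argument. Your descent of $\lambda$ from $K$ to $R$ via $\Hom_R(I,R)=R$ is correct and is the right use of the grade-two hypothesis.
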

\begin{proof} Consider the free presentation $R^p\xrightarrow{M}R^{m+1}\xrightarrow{\mathbf{f}}R\to R/I\to0$, where $\mathbf{f}$ denotes the vector $(f_0\cdots f_m)$.
Since $\grade(I)\geq 2$, dualizing into $R$ yields the exact sequence $0\to R^*\xrightarrow{\mathbf{f}^t}R^{{m+1}^*}\xrightarrow{M^t}R^{p^*}$ (as is well-known, the natural inclusion $R\hookrightarrow {\rm Hom}(I,R)$ is an equality if and only if $I$ has grade at least $2$) and moreover   $\Image(M)$ has rank $m$.
Let $N$ be an $m\times (m+1)$ submatrix of $M^t$ of rank $m$ and denote the (ordered) $m\times m$ minors of $N$ by $\Delta=\Delta_0,\ldots,\Delta_m$. 
For an arbitrary row of $M^t$, say, $(r_{0i}\cdots r_{mi})$, $\sum_{j=0}^{j=m}r_{ji}\Delta_j$ is the determinant of an $(m+1)\times(m+1)$ matrix which is either a matrix having two equal rows or an $(m+1)\times(m+1)$ submatrix of $M^t$. In any case the corresponding determinant vanishes. Therefore in the following diagram the compositions of any two compatible maps are zero
$$
 \xymatrix{
   &  R^* \ar^{\mathbf{f}^t}[r]   &  R^{{n+1}^*}\ar^{M^t}[r]   &  R^{p^*}\\
   &                       &  R\ar_{(\Delta)}[u]        &\\
  }.
 $$
Since $\ker(M^t)=\Image\mathbf{f}^t$, one has $\Delta=(\Delta_0\cdots\Delta_n)\in \Image(\mathbf{f}^t)$; which proves the first assertion.
To see the degree assertion in the equihomogeneous case, note that in the graded case $h$ is homogeneous, hence the equality $hf_i=\Delta_i$ implies that $\delta \leq \Deg(\Delta_i)\leq m\,(b_1(I)-\delta)$.
\end{proof}

Since the bound in Proposition~\ref{grade2} is quite natural, it makes sense confronting it with the birational bound obtained in Theorem~\ref{preg}.
For this we assume that the map $\mathfrak{F}$ is birational, as in Theorem~\ref{preg}, and $I\subset A$ is a base ideal of grade $\geq 2$, as in Proposition~\ref{grade2}.
In this situation, the representative of $\mathfrak{F}$ is uniquely defined up to scalars (see \cite[Proposition 1.1]{S}), hence $\delta(\mathfrak{F})=\delta$.
Let us suppose in addition that the function $\mathfrak{f}$ attains its maximum at $r=1$ -- a hypothesis we will digress on in a minute.

Then the second bound gives $\delta\leq m (\Reg(I)-\delta+1)$, i.e., $\delta\leq \frac{m}{m+1} (\Reg(I)+1)$.
Since $\Reg(I)\geq e-1$, we see that the first bound is better.

On the other hand, the second bound is more comprehensive and besides it has the advantage of  providing a lower bound for $\mathfrak{f}$ in terms of an invariant of the map $\mathfrak{F}$ and not of a particular base ideal $I$.

{\bf Acknowledgments.} Both authors were partially
supported  by the Math-Amsud project 15MATH03- SYRAM. The second author was additionally partially supported by a grant from CNPQ (302298/2014-2) and a CAPES Senior Visiting Researcher grant at Universidade Federal da Para\ii ba  (5742201241/2016).
Part of this work was done while the first author   was  supported by a CNPq grant (300586/2012-4) and by a ``p\'os-doutorado no exterior'' 233035/2014-1 for which he is thankful.  He would also like to thank  the University of Utah for the hospitality during the period. Both authors  thank J\'er\'emy Blanc for calling their attention to the problem of finding the degrees of the inverse map and also for mentioning the reference \cite{BBV}.


\end{document}